\theoremstyle{definition}
\newtheorem{dfn}{Definition}
\newtheorem{example}[dfn]{Example}
\newtheorem{assum}[dfn]{Assumption}
\newtheorem{thm}[dfn]{Theorem}
\newtheorem{cor}[dfn]{Corollary}
\newtheorem{prob}[dfn]{Problem}
\newtheorem{rem}[dfn]{Remark}
\newcommand{\bE} { {\mathbb E}}
\newcommand{\bR} { {\mathbb R}}
\newcommand{\bP} { {\mathbb P}}
\newcommand{\bZ} { {\mathbb Z}}
\newcommand{\cB} { {\mathcal B}}
\newcommand{\cM} { {\mathcal M}}
\newcommand{\cN} { {\mathcal N}}
\newcommand{\cO} { {\mathcal O}}
\newcommand{\cQ} { {\mathsf Q}}
\newcommand{\cR} { {\mathsf R}}
\newcommand{\cS} { {\mathcal S}}
\newcommand{\cF} { {\mathcal F}}
\newcommand{\ito}[1]{{\color{black}{#1}}}
\newcommand{\sugiura}[1]{{\color{black}{#1}}}
\newcommand{\red}[1]{{\color{black}{#1}}}
\begin{document}
%
\title{Bayesian Differential Privacy for Linear Dynamical Systems
}
%
%
%

\author{Genki~Sugiura,
        Kaito~Ito,~\IEEEmembership{Student Member,~IEEE}
        and~Kenji~Kashima,~\IEEEmembership{Senior Member,~IEEE}
\thanks{The authors are with the Graduate School of Informatics, Kyoto University, Kyoto, Japan e-mail:  kk@i.kyoto-u.ac.jp.}
\thanks{$ \copyright $ 2021 IEEE. Personal use of this material is permitted. Permission from IEEE must be obtained for all other uses, in any current or future media, including reprinting/republishing this material for advertising or promotional purposes, creating new collective works, for resale or redistribution to servers or lists, or reuse of any copyrighted component of this work in other works.}}

%
%

\markboth{}%
{}
%



\maketitle
\thispagestyle{empty}

\begin{abstract}
Differential privacy is a privacy measure based on the difficulty of discriminating between similar input data. 
In differential privacy analysis, \emph{similar} data usually implies that their distance does not exceed a predetermined threshold. 
It, consequently, does not take into account the difficulty of distinguishing data sets that are far apart, which often contain highly private information. 
This problem has been pointed out in the research on differential privacy for static data, and Bayesian differential privacy has been proposed, which provides a privacy protection level even for outlier data by utilizing the prior distribution of the data.
In this study, we introduce this Bayesian differential privacy to dynamical systems, and provide privacy guarantees for distant input data pairs and reveal its fundamental property. For example, we design a mechanism that satisfies the desired level of privacy protection, which characterizes the trade-off between privacy and information utility.
\end{abstract}

\begin{IEEEkeywords}
Control system security, differential privacy, stochastic system.
\end{IEEEkeywords}

%
\IEEEpeerreviewmaketitle

\section{Introduction}\label{sec:introduction}
As the Internet-of-Things (IoT) and cloud computing are attracting more and more attention for their convenience, 
privacy protection and security have become key technologies in control systems. 
To cope with privacy threats, many privacy protection methods have been studied so far~\cite{sweeneyKanonymityModelProtecting2002,machanavajjhalaLdiversityPrivacyKanonymity2006,liTClosenessPrivacyKAnonymity2007}.
Among them, differential privacy \cite{dworkCalibratingNoiseSensitivity2006} has been used to solve many privacy-related problems in areas such as smart grid \cite{sandbergDifferentiallyPrivateState2015}, health management \cite{dankarApplicationDifferentialPrivacy2012}, and blockchain \cite{yangDifferentiallyPrivateData2018}, because it can mathematically quantify privacy guarantees.
Differential privacy has originally been applied to static data, but as shown in the example of power systems above, there is an urgent need to establish privacy protection techniques for dynamical systems.
In recent years, the concept of differential privacy has been introduced to dynamical systems \cite{lenyDifferentiallyPrivateFiltering2014a}, and from the viewpoint of control systems theory, the relationship between privacy protection and the observability of systems has been clarified, and methods of controller design with privacy protection in mind have been studied \sugiura{\cite{kawanoDesignPrivacyPreservingDynamic2020, cortes_differential_2016, katewa_privacy_2018}}.

Conventional differential privacy is a privacy measure based on the difficulty of distinguishing similar data, where $x$ and $x'$ are regarded as being similar if $|x-x'|\le c$ for a prescribed $c>0$. To put it conversely, there is no indistinguishability guarantee for $x$ and $x'$ if $|x-x'|> c$. 
This implies there is a risk of information leakage when there are outliers from normal data as pointed out in~\cite{Ito2021heavy}.
For example, unusual electricity consumption patterns may contain highly private information about the lifestyle. 
In the literature \cite{triastcynBayesianDifferentialPrivacy2020b}, a new concept called Bayesian Differential Privacy is developed for static data to solve this problem.
Bayesian differential privacy considers the underlying probability distribution of the data, and attempts to guarantee privacy for data sets that are far apart.

In this study, we consider a prior distribution for the signal that we want to keep secret and introduce Bayesian differential privacy for linear dynamical systems. 
Similar to the conventional differential privacy cases \cite{kawanoDesignPrivacyPreservingDynamic2020}, we consider a mechanism where stochastic noise is added to the output data.
Note that applying a large noise increases the privacy protection level, but decreases the information usefulness
\cite{kawanoModularControlPrivacytobepublished}.
In Theorem \ref{thm:BDP_dynamic} below, a lower bound of noise scale to guarantee a prescribed Bayesian differential privacy level will be derived. Other properties including the relation to the conventional case are investigated based on this. 
The rest of this paper is organized as follows.
In Section~\ref{sec:works_and_setup}, we introduce differential privacy for dynamical systems.
In Section~\ref{sec:with_prior}, we propose Bayesian differential privacy for dynamical systems and derive a sufficient condition for added noise to achieve its privacy guarantee.
In Section~\ref{sec:modeling_mechanism}, considering the trade-off between privacy and information utility, we derive the Gaussian noise with the minimum energy while guaranteeing the Bayesian differential privacy.
In Section~\ref{sec:example}, the usefulness of Bayesian differential privacy is described via a numerical example.
Some concluding remarks are given in Section~\ref{sec:conclusion}.

\paragraph*{Notations}
The sets of real numbers and nonnegative integers are denoted by $\bR$ and $\bZ_+$, respectively.
The imaginary unit is denoted by $ {\rm j} $.
For vectors $x_1,\dots, x_m \in \bR^n$, a collective vector $[x_1^\top \cdots x_m^\top]^\top \in \bR^{nm}$ is described by $[x_1;\cdots;x_m]$ for the sake of simplicity of description.
For a sequence $u(t) \in \bR^n,\ t = 0, 1, \ldots, T$, a collective vector is denoted by $U_T \coloneqq [u(0); \cdots ;u(T)] \in \bR^{(T+1)n}$ using a capital alphabet.
For a square matrix $A \in \bR^{n \times n}$, its determinant is denoted by $\mathrm{det}(A)$, and when its eigenvalues are real, its maximum and minimum eigenvalues are denoted by $\lambda_{\max}(A)$ and $\lambda_{\min}(A)$, respectively.
We write $ A\succ 0 $ (resp. $ A\succeq 0 $) if $ A $ is positive definite (resp. semidefinite).
For $A\succeq 0$, the principal square root of $A$ is denoted by $A^{1/2}$.
The identity matrix of size $n$ is denoted by $I_n$. The subscript $n$ is omitted when it is clear from the context. 
\red{The Euclidean norm of a vector $x\in\bR^n$ is denoted by $| x |$, and 
	its weighted norm with $A \succ 0$ is denoted by $|x|_A \coloneqq (x^\top A x)^{1/2}$.}
The indicator function of a set $ S\subset \bR^n $ is denoted by $ 1_S $, i.e., $ 1_S (x) = 1 $ if $ x\in S $, and $ 0 $, otherwise.
For a topological space $X$, the Borel algebra on $X$ is denoted by $\cB(X)$.
Fix some complete probability space $(\Omega, \cF, \bP)$, and let $ \bE $ be the expectation with respect to $ \bP $.
\red{For an $ \mathbb{R}^n $-valued random vector $ w $, $ w \sim  \cN_n (\mu,\Sigma)$ means that $ w $ has a nondegenerate multivariate Gaussian distribution with mean $ \mu \in \bR^n $ and covariance matrix $ \Sigma \succ 0 $.}
The so called $\cQ$-function is defined by $\cQ (c) \coloneqq \frac{1}{\sqrt{2\pi}}\int_c^{\infty} {\rm e}^{-\frac{v^2}{2}}dv$, where $\cQ(c) <1/2$ for $c>0$, and $\cR (\varepsilon,\delta)\coloneqq(\cQ^{-1}(\delta) + \sqrt{(\cQ^{-1}(\delta))^2 + 2 \varepsilon})/2\varepsilon$.
The gamma function is \red{denoted by $ \Gamma(\cdot) $}.
A random variable $z$ is said to have a \ito{$ \chi^2 $} distribution with $k$ degrees of freedom, denoted by $z \sim \chi^2_k$, if its distribution has the following probability density:
\begin{align*}
p(z;k) = \frac{\ito{z}^{k/2-1}\mathrm{e}^{-\sugiura{z}/2}}{\,2^{k/2} \Gamma(k/2)},\quad z \ge 0,\ k \in \{1, 2, \ldots\}.
\end{align*}

\section{Conventional differential Privacy for dynamical systems}
\label{sec:works_and_setup}
In this section, we briefly overview fundamental results on differential privacy for dynamical systems.
Consider the following discrete-time linear system:
\begin{align}
\left\{\begin{array}{l}
x(t+1) = A x(t) + B u(t), \\
y(t) = C x(t) + D u(t),
\end{array}\right. \label{sys}
\end{align}
for $t \in \bZ_+$, where $x(t) \in \bR^n$, $u (t) \in \bR^m$, and $y(t) \in \bR^q$ denote
the state, input, and output, respectively, and $A \in \bR^{n\times n}$,
$B \in \bR^{n\times m}$, $C \in \bR^{q\times n}$, and $D \in \bR^{q\times m}$.
For simplicity, we assume $x(0)=0$ and 
the information to be kept secret is the input sequence $U_T$ up to a finite time $T \in \bZ_+$. 
For \eqref{sys}, the output sequence $Y_T \in \bR^{(T+1)q}$ is described by
\begin{align}
Y_T = N_T U_T, \label{Outseq}
\end{align}
where $N_T \in \bR^{(T+1)q \times (T+1)m}$ is
\begin{align}
N_T & = {\cal T}_T(A,B,C,D)\\
& \coloneqq\left[\begin{array}{cccccc}
D & 0 & \cdots & \cdots &  0\\
CB & D & \ddots &  & \vdots \\
CAB & CB & D &\ddots & \vdots \\
\vdots & \vdots & \ddots & \ddots &0 \\
CA^{T-1}B & CA^{T-2}B & \cdots & CB & D
\end{array}\right].
\label{Markov}
\end{align}
To proceed with differential privacy analysis, we consider the output $y_w(t)\coloneqq y(t) + w(t)$ after adding the noise $w(t) \in \bR^q$; see Fig.~\ref{fig:diagram_out_noise}.
From \eqref{Outseq}, $Y_{w,T} \in \bR^{(T+1)q}$ can be described by
\begin{align}
Y_{w,T} = N_T U_T + W_T, \label{Out_noise}
\end{align}
which defines a mapping $\cM \colon \bR^{(T+1)m} \times \Omega \ni (U_T, \omega) \mapsto Y_{w,T} \in \bR^{(T+1)q}.$
\begin{figure}[!t]
	\centering
	\includegraphics[height=0.6in]{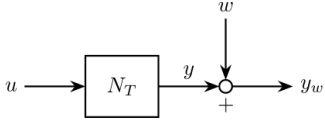}
	\caption{Mechanism with output noise.}
	\label{fig:diagram_out_noise}
\end{figure}
In differential privacy analysis, this mapping is called a \emph{mechanism}.

Next, the definition of the differential privacy is given. We begin with the definition of the data similarity:
\begin{dfn}
	Given a positive definite matrix $K \in \bR^{(T+1)m \times (T+1)m}$, a pair of input data $(U_T, U'_T) \allowbreak \in \bR^{(T+1)m} \times \bR^{(T+1)m}$ is said to belong to the binary relation $K$-adjacency if
	\begin{align}
	|U_T - U'_T|_K \le 1. \label{cK_Adj}
	\end{align}
	The set of all pairs of the input data that are $K$-adjacent is denoted by $\mathrm{Adj}_K$.
\end{dfn}
This $K$-adjacency is an extension of $c$-adjacency, which corresponds to $K=I/c^2$, for the $2$-norm in previous work~\cite{kawanoDesignPrivacyPreservingDynamic2020}. 
Next, we describe the definition of $(K, \varepsilon, \delta)$-differential privacy in dynamical systems in the same way as for static data \cite[Definition 2.4]{kawanoDesignPrivacyPreservingDynamic2020}.
\begin{dfn}[$(K, \varepsilon, \delta)$-differential privacy]\label{def:DP_dynamic_cadj}
	Given $\varepsilon>0$ and $\delta \ge 0$, the mechanism \eqref{Out_noise} is said to be \emph{$(K, \varepsilon, \delta)$-differentially private} ($(K,\varepsilon,\delta)$-DP) at a finite time instant $T \in \bZ_+$, if 
	\begin{align}
	&\bP [N_T U_T + W_T \in \cS] \nonumber \\
	&\le {\rm e}^{\varepsilon} \bP [N_T U'_T + W_T \in \cS] + \delta, \quad \forall \cS \in \cB\left(\bR^{(T + 1)q}\right) \label{diffprv}
	\end{align}
	for any $(U_T,U'_T) \in {\rm Adj}_K$.
\end{dfn}

Suppose that the output sequence $Y_{w,T}$ and the state equation \eqref{sys} are available for an attacker  trying to estimate the value of the input sequence $U_T$. Differential privacy requires the output sequence statistics are close enough at least for adjacent data pairs. 
A sufficient condition for the mechanism induced by Gaussian noise to be $(\varepsilon, \delta)$-differentially private under $c$-adjacency is derived in \cite[Theorem 2.6]{kawanoDesignPrivacyPreservingDynamic2020}. This result can be straightforwardly extended as follows:
\begin{thm}\label{thm:DP_Kadj}
	The Gaussian mechanism \eqref{Out_noise} induced by $W_T \sim \cN_{(T+1)q}(\mu_w,\Sigma_w)$ is $(K, \varepsilon,\delta)$-differentially private at a finite time $T \in \bZ_+$ with $\varepsilon > 0$ and $1/2 > \delta > 0$, if the covariance matrix $\Sigma_w \succ 0$ is chosen such that
	\begin{align}
	\lambda_{\max}^{-1/2}\left(\cO_{K, \Sigma_w, T}\right) \ge \cR (\varepsilon,\delta),\label{ep_ck}
	\end{align}
	where
	\begin{align}
	\cO_{K, \Sigma_w, T} \coloneqq K^{-1/2}N_T^\top \Sigma_w^{-1} N_T K^{-1/2}.
	\end{align}
\end{thm}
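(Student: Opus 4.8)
The plan is to reduce the matrix condition \eqref{ep_ck} to a scalar bound on the sensitivity of the mechanism and then invoke the standard Gaussian privacy-loss argument, following the line of \cite[Theorem 2.6]{kawanoDesignPrivacyPreservingDynamic2020}. Fix an adjacent pair $(U_T,U'_T)\in\mathrm{Adj}_K$ and set $d\coloneqq U_T-U'_T$, $v\coloneqq N_T d$, so $|d|_K\le 1$. Under input $U_T$ the output $N_T U_T + W_T$ has law $\cN_{(T+1)q}(N_T U_T+\mu_w,\Sigma_w)$, and under $U'_T$ it has law $\cN_{(T+1)q}(N_T U'_T+\mu_w,\Sigma_w)$; the two Gaussians differ only through the mean shift $v$, so $\mu_w$ is immaterial. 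First I would introduce the privacy-loss variable $L\coloneqq \ln(p_{U_T}(N_T U_T+W_T)/p_{U'_T}(N_T U_T+W_T))$, where $p_{U_T},p_{U'_T}$ are the two output densities; a direct Gaussian computation gives $L=(W_T-\mu_w)^\top\Sigma_w^{-1}v+\tfrac12 v^\top\Sigma_w^{-1}v$, hence $L\sim\cN_1(\eta^2/2,\eta^2)$ with $\eta^2\coloneqq v^\top\Sigma_w^{-1}v=d^\top N_T^\top\Sigma_w^{-1}N_T d$; the case $\eta=0$, where the two laws coincide, is trivial.

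Next I would run the ``bad-event'' decomposition. For $\cS\in\cB(\bR^{(T+1)q})$ put $B\coloneqq\{y:\ln(p_{U_T}(y)/p_{U'_T}(y))>\varepsilon\}$. On $\cS\setminus B$ the likelihood ratio is at most ${\rm e}^{\varepsilon}$, giving $\bP[N_T U_T+W_T\in\cS\setminus B]\le {\rm e}^{\varepsilon}\,\bP[N_T U'_T+W_T\in\cS]$, while $\bP[N_T U_T+W_T\in\cS\cap B]\le\bP[N_T U_T+W_T\in B]=\bP[L>\varepsilon]$. Adding the two bounds yields \eqref{diffprv} with $\delta$ replaced by $\bP[L>\varepsilon]=\cQ(\varepsilon/\eta-\eta/2)$, the last equality being the Gaussian tail of $L\sim\cN_1(\eta^2/2,\eta^2)$. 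Hence it suffices to arrange $\cQ(\varepsilon/\eta-\eta/2)\le\delta$ uniformly over $\mathrm{Adj}_K$.

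Finally I would invert this scalar inequality and connect it to \eqref{ep_ck}. Since $\cQ$ is strictly decreasing and $0<\delta<1/2$ (so $\cQ^{-1}(\delta)>0$), the bound $\cQ(\varepsilon/\eta-\eta/2)\le\delta$ is equivalent to $\varepsilon/\eta-\eta/2\ge\cQ^{-1}(\delta)$, i.e., after multiplying by $\eta>0$, to $\tfrac12\eta^2+\cQ^{-1}(\delta)\eta-\varepsilon\le 0$; the positive root of this quadratic is $\sqrt{\cQ^{-1}(\delta)^2+2\varepsilon}-\cQ^{-1}(\delta)$, and rationalizing the numerator identifies it with $1/\cR(\varepsilon,\delta)$. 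It then remains to verify $\sup_{(U_T,U'_T)\in\mathrm{Adj}_K}\eta\le 1/\cR(\varepsilon,\delta)$: writing $d=K^{-1/2}z$ with $|z|=|d|_K\le 1$ gives $\eta^2=z^\top\cO_{K,\Sigma_w,T}z\le\lambda_{\max}(\cO_{K,\Sigma_w,T})$, so the hypothesis \eqref{ep_ck} --- equivalently $\lambda_{\max}^{1/2}(\cO_{K,\Sigma_w,T})\le 1/\cR(\varepsilon,\delta)$ --- closes the argument. I do not expect a genuine obstacle: the one delicate point is the algebraic step identifying $1/\cR(\varepsilon,\delta)$ with the positive root of the quadratic, and the only change from the $c$-adjacency case of \cite[Theorem 2.6]{kawanoDesignPrivacyPreservingDynamic2020} is the substitution $z=K^{1/2}d$ (equivalently, replacing $N_T$ by $N_T K^{-1/2}$ and using unit adjacency), under which that proof carries over essentially verbatim.
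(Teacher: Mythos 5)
Your proof is correct and follows essentially the same route the paper intends: the paper states this theorem as a straightforward extension of \cite[Theorem 2.6]{kawanoDesignPrivacyPreservingDynamic2020}, and the same privacy-loss/tail-bound argument (with $h=1/\eta$ and the condition $\cQ(\varepsilon h - \tfrac{1}{2h})\le\delta$, i.e., $h\ge\cR(\varepsilon,\delta)$) reappears in the paper's proof of Theorem~\ref{thm:BDP_dynamic}. Your substitution $z=K^{1/2}d$ with $|z|=|d|_K\le 1$ and the eigenvalue bound on $\cO_{K,\Sigma_w,T}$ is exactly the weighted-adjacency modification the paper has in mind.
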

\textcolor{black}{Larger noise $ W_T $ and lower threshold for the adjacency in the sense of $ \Sigma $ and $ K $, respectively, make the left-hand side of \eqref{ep_ck} larger. This implies that differential privacy is ensured for smaller $ \varepsilon $ and $ \delta $ because $ \cR$ is a decreasing function.
	Further insight of $ K $ will be revealed in Corollary~\ref{cor:comparison} below.}

\section{Bayesian differential privacy for dynamical systems}\label{sec:with_prior}

\subsection{Formulation}
In Definition \ref{def:DP_dynamic_cadj}, the difficulty of distinguishing data pairs whose $K$-weighted distance is larger than a threshold $1$ is not taken into account. 
Note that there is no design guideline for $K$. 
In this section, we introduce Bayesian differential privacy for dynamical systems. 
To this end, we assume the following availability of the prior distribution of the data to be protected, and provide a privacy guarantee that takes into account the discrimination difficulty of data pairs based on the prior. 

\begin{assum}\label{assum:input_with_prior}
	The input data to the mechanism, $U_T$, is an $\bR^{(T+1)m}$-valued random variable \ito{with distribution $ \bP_{U_T} $. \ito{In addition, one can use the prior $ \bP_{U_T} $ to design a mechanism}}.
\end{assum}

\ito{The following is a typical example where a private input signal is a realization of a random variable.}
\begin{example}\label{example:freq_model}
	Suppose that the input data $u(t)$ to be protected is the reference for tracking control; see  \cite{kawanoModularControlPrivacytobepublished}. In many applications, tracking to the reference signal over specified frequency ranges is required. Such a control objective can be represented by filtering white noise $\xi(t)$. To be more precise, we assume $u(t)=r(t)$ is generated by 
	\begin{align}
	&x_r(t+1) = A_r x_r(t) + B_r \xi(t),\ x_r(0) = 0,  \label{eq:noise_filtering}\\
	& r(t) = C_r x_r(t) + D_r \xi(t), \label{eq:noise_filtering2}\\
	&\xi(t) \sim \cN(0, I),\quad t \in \{0, 1, \ldots, T\},
	\end{align}
	where $A_r \in \bR^{l \times l},\ B_r \in \bR^{l \times m},\ C_r \in \bR^{m \times l},\ D_r \in \bR^{m \times m}$.
	The power spectrum of $u(t)$ is characterized by the frequency transfer function 
	\begin{align}
	\label{eq:TF}
	G_r({\rm e}^{{\rm j}\lambda}):=C_r({\rm e}^{{\rm j}\lambda}I-A_r)^{-1}B_r + D_r,\ \lambda\in [0,\pi).
	\end{align}
	In this case,
	\begin{align}
	r(t) = \begin{cases}
	D_r \xi(0) & (t = 0),\\
	D_r \xi(t) + \sum_{j=1}^{t} C_r A_r^{j-1}B_r \xi(t-j) & (t \ge 1),
	\end{cases}
	\end{align}
	and the prior distribution is given by $U_T \sim {\cal N}(0,\Sigma_U)$ with
	\begin{align}
	& \Sigma_U := \Xi_T \Xi_T^\top, \label{eq:freq_covar}\\
	& \Xi_T := {\cal T}_T( A_r, B_r, C_r, D_r ). \label{eq:freq_covar2}
	\end{align}
	\hfill\qedsymbol
\end{example}

\red{Note that the step reference signal whose value obeys $r(t) \equiv \bar{r} \sim \mathcal{N}(0,\Sigma_{\rm s})$ can be modeled by setting $ A_r  = C_r = I, \ B_r = D_r = 0  $ with $ x_r (0) \sim \mathcal{N} (0,\Sigma_{\rm s}) $ in \eqref{eq:noise_filtering}, \eqref{eq:noise_filtering2}.
	This corresponds to the case where the initial state is the private information rather than the input sequence $ U_T $ as discussed in \cite{kawanoDesignPrivacyPreservingDynamic2020}.}

Then, based on the Bayesian differential privacy for static data~\cite{triastcynBayesianDifferentialPrivacy2020b}, we define $(\bP_{U_T}, \gamma, \varepsilon, \delta)$-Bayesian differential privacy, which is an extension of differential privacy for dynamical systems.
\begin{dfn}[$(\bP_{U_T}, \gamma, \varepsilon, \delta)$-Bayesian differential privacy]\label{def:BDP_dynamic}
	Assume that the random variables $U_T, U'_T$ are independent and both follow the distribution $\bP_{U_T}$.
	Given $1\ge \gamma \ge 0,\ \varepsilon >0$ and $\delta \ge 0$, the mechanism \eqref{Out_noise} is said to be \emph{$(\bP_{U_T}, \gamma, \varepsilon, \delta)$-Bayesian 
		differentially private} ($(\bP_{U_T}, \gamma, \varepsilon, \delta)$-BDP) at a finite time instant $T \in \bZ_+$, if 
	\begin{align}
	&\bP \Bigl[ \bP [N_T U_T + W_T \in \cS \mid U_T] \nonumber \\
	&\le \mathrm{e}^{\varepsilon} \bP [N_T U'_T + W_T \in \cS \mid U'_T] + \delta \Bigr] \ge \gamma, \quad \forall \cS \in \cB(\bR^{(T+1)q}). \label{diffprv_2}
	\end{align}
\end{dfn}
\red{In \eqref{diffprv_2}, the outer (resp. inner) $\mathbb{P}$ is taken with respect to $(U_T, U'_T)$ (resp. $W_T$).}
Roughly speaking, the definition of BDP is that the probability that the mechanism satisfies $(K, \varepsilon, \delta)$-DP is greater than or equal to $\gamma$. Note that this definition places no direct restriction on the distance between a pair of input data $U_T, U'_T$.
\subsection{Sufficient condition for noise scale}

It is desirable that the added noise $w$ is small to retain the data usefulness; see e.g., Section \ref{sec:example}.  
The following theorem gives a sufficient condition for noise scale to guarantee $(\bP_{U_T}, \gamma, \varepsilon, \delta)$-Bayesian differential privacy.
\begin{thm}\label{thm:BDP_dynamic}
	Suppose that the prior distribution $\bP_{U_T}$ of $U_T$ is $\cN_{(T+1)m}(0,\Sigma)$.
	The Gaussian mechanism \eqref{Out_noise} induced by $W_T \sim \cN_{(T+1)q}(\mu_w,\Sigma_w)$ is $(\bP_{U_T}, \gamma, \varepsilon, \delta)$-Bayesian differentially private at a finite time $T \in \bZ_+$ with $1\ge \gamma \ge 0$, $\varepsilon > 0$, and $1/2 > \delta > 0$, if the covariance matrix $\Sigma_w \succ 0$ is chosen such that
	\begin{align}
	\lambda_{\max}^{-1/2}(\cO_{\Sigma, \Sigma_w, T}) \ge c(\gamma,T)\cR(\varepsilon, \delta) \label{eq:cond_BDP}
	\end{align}
	where $\cO_{\Sigma, \Sigma_w, T}$ is defined by
	\begin{align}
	\cO_{\Sigma, \Sigma_w, T} &\coloneqq \Sigma^{1/2}N_T^\top\Sigma_w^{-1}N_T\Sigma^{1/2}
	\end{align}
	and $c(\gamma,T)$ is the unique $c>0$ that satisfies
	\begin{align}
	\gamma &= \int_0^{c^2/2} \dfrac{1}{2^{\sugiura{(T+1)m/2}} \Gamma(\sugiura{(T+1)m/2)}} x^{\sugiura{(T+1)m/2} - 1}\mathrm{e}^{-x/2} dx. \label{eq:chi_squared}
	\end{align}
\end{thm}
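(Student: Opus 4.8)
The plan is to reduce the Bayesian statement to the conventional guarantee of Theorem~\ref{thm:DP_Kadj} and then to bound, via a $\chi^2$ tail estimate, the probability that a random pair of inputs is ``close enough'' for that guarantee to apply; in fact I would establish the stronger bound $\bP[\text{the inner inequality in \eqref{diffprv_2} holds for every }\cS]\ge\gamma$ (outer probability over the independent pair $(U_T,U_T')$), which clearly implies \eqref{diffprv_2}. The starting point is the pairwise content of Theorem~\ref{thm:DP_Kadj}: for a \emph{fixed} realization $(U_T,U_T')=(u_T,u_T')$, the Gaussian mechanism with $W_T\sim\cN_{(T+1)q}(\mu_w,\Sigma_w)$ satisfies $\bP[N_T u_T+W_T\in\cS]\le\mathrm{e}^\varepsilon\bP[N_T u_T'+W_T\in\cS]+\delta$ for all $\cS$ whenever $|N_T(u_T-u_T')|_{\Sigma_w^{-1}}\le 1/\cR(\varepsilon,\delta)$; indeed, on that event one can pick $K\succ0$ with $K\succeq\cR(\varepsilon,\delta)^2 N_T^\top\Sigma_w^{-1}N_T$ and $|u_T-u_T'|_K\le1$ (such a $K$ exists precisely because $\cR(\varepsilon,\delta)^2|N_T(u_T-u_T')|_{\Sigma_w^{-1}}^2\le1$), so that $(u_T,u_T')\in\mathrm{Adj}_K$ and $\cO_{K,\Sigma_w,T}\preceq I/\cR(\varepsilon,\delta)^2$, i.e.\ \eqref{ep_ck} holds. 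Since $W_T$ is independent of $(U_T,U_T')$, this is exactly the inner inequality of \eqref{diffprv_2} evaluated at $(u_T,u_T')$, so it suffices to prove $\bP[\,|N_T(U_T-U_T')|_{\Sigma_w^{-1}}\le 1/\cR(\varepsilon,\delta)\,]\ge\gamma$.

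To evaluate this probability I would identify the law of the quadratic form inside it. Because $U_T$ and $U_T'$ are independent and each is $\cN_{(T+1)m}(0,\Sigma)$, the difference obeys $U_T-U_T'\sim\cN_{(T+1)m}(0,2\Sigma)$, hence $U_T-U_T'=\sqrt{2}\,\Sigma^{1/2}g$ for some standard Gaussian $g\sim\cN_{(T+1)m}(0,I)$, and so
\[
|N_T(U_T-U_T')|_{\Sigma_w^{-1}}^2 = 2\,g^\top\cO_{\Sigma,\Sigma_w,T}\,g \le 2\lambda_{\max}(\cO_{\Sigma,\Sigma_w,T})\,|g|^2,
\]
with $|g|^2\sim\chi^2_{(T+1)m}$. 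Consequently the event of interest contains $\{|g|^2\le(2\lambda_{\max}(\cO_{\Sigma,\Sigma_w,T})\cR(\varepsilon,\delta)^2)^{-1}\}$, and its probability is at least $\bP[\chi^2_{(T+1)m}\le(2\lambda_{\max}(\cO_{\Sigma,\Sigma_w,T})\cR(\varepsilon,\delta)^2)^{-1}]$.

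It then remains to match this threshold with the quantile defining $c(\gamma,T)$. The right-hand side of \eqref{eq:chi_squared} is the distribution function of $\chi^2_{(T+1)m}$ evaluated at $c^2/2$; it is continuous and strictly increasing in $c>0$ with range $(0,1)$, so $c(\gamma,T)$ is well defined and unique for $\gamma\in(0,1)$ (the endpoints $\gamma=0$ and $\gamma=1$ being trivial and vacuous, respectively), and $\bP[\chi^2_{(T+1)m}\le a]\ge\gamma$ holds precisely when $a\ge c(\gamma,T)^2/2$. Squaring \eqref{eq:cond_BDP} gives $\lambda_{\max}(\cO_{\Sigma,\Sigma_w,T})\le(c(\gamma,T)^2\cR(\varepsilon,\delta)^2)^{-1}$, which is equivalent to $(2\lambda_{\max}(\cO_{\Sigma,\Sigma_w,T})\cR(\varepsilon,\delta)^2)^{-1}\ge c(\gamma,T)^2/2$; combined with the previous paragraph this yields the desired lower bound $\gamma$ and finishes the proof.

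The step I expect to be the main obstacle is the inequality in the second paragraph: $g^\top\cO_{\Sigma,\Sigma_w,T}\,g$ is a \emph{weighted} (generalized) $\chi^2$ variable, not an ordinary one, and to feed it into the privacy guarantee I need an \emph{upper} bound that is a multiple of a genuine $\chi^2_{(T+1)m}$ — the bound $g^\top\cO g\le\lambda_{\max}(\cO)|g|^2$ does exactly this, at the in-general-unavoidable price of a possibly conservative eigenvalue estimate. One must also track the factor $2$ coming from $\mathrm{Var}(U_T-U_T')=2\Sigma$, which is why $c(\gamma,T)$ enters through $c^2/2$ in \eqref{eq:chi_squared} rather than $c^2$. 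The remaining ingredients — the pairwise Gaussian-mechanism fact inherited from Theorem~\ref{thm:DP_Kadj} and the monotonicity of the $\chi^2$ distribution function — are routine.
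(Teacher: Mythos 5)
Your proposal is correct and takes essentially the same route as the paper: both reduce the claim to showing $\bP\bigl[\,|N_T(U_T-U'_T)|_{\Sigma_w^{-1}} \le 1/\cR(\varepsilon,\delta)\,\bigr] \ge \gamma$, bound the weighted quadratic form by $\lambda_{\max}(\cO_{\Sigma,\Sigma_w,T})$ times a $\chi^2_{(T+1)m}$ variable (tracking the factor $2$ from ${\rm Var}(U_T-U'_T)=2\Sigma$), and match the quantile $c(\gamma,T)^2/2$. The only cosmetic difference is that you obtain the pairwise Gaussian-mechanism guarantee by invoking Theorem~\ref{thm:DP_Kadj} with a pair-dependent adjacency weight $K$, whereas the paper re-derives it directly via the $\cQ$-function argument of \cite{kawanoDesignPrivacyPreservingDynamic2020}, i.e., $\cQ(\varepsilon h - \tfrac{1}{2h})\le\delta$ iff $h\ge\cR(\varepsilon,\delta)$.
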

\begin{proof}
	Using a similar argument as in the proof for \cite[Theorem 2.6]{kawanoDesignPrivacyPreservingDynamic2020}, for any fixed $U,U' \in \bR^{(T+1)m}$, one has
	\begin{align*}
	& \bP [N_T U_T + W_T \in \cS \mid U_T = U] \\
	&\le {\rm e}^{\varepsilon} \bP [N_T U'_T + W_T \in \cS \mid U'_T = U']  \\
	&+ \bP\left[Z \ge \sugiura{\varepsilon h - \frac{1}{2h}} \mid U_T = U,\ U'_T = U'\right]
	\end{align*}
	where
	\begin{align*}
	h\coloneqq |Y - Y'|_{\Sigma^{-1}_w}^{-1}, \ Y \coloneqq N_T U_T, \ Y' \coloneqq N_T U'_T,
	\end{align*}
	and $Z \sim \cN(0, 1)$. Then, the mechanism is $(\bP_{U_T}, \gamma, \varepsilon, \delta)$-Bayesian differentially private, if $\cQ(\sugiura{\varepsilon h - \frac{1}{2h}}) \le \delta$ with probability at least $\gamma$, i.e.,
	\begin{align}
	\bP[h \ge \cR(\varepsilon, \delta)] \ge \gamma. \label{eq:eval_res}
	\end{align}
	The inequality \eqref{eq:eval_res} holds if \eqref{eq:cond_BDP} is satisfied.
	This is because
	\begin{align*}
	h^{-2} &= |N_T(U_T - U'_T)|^2_{\Sigma^{-1}_w}\\
	&= (U_T - U'_T)^\top \Sigma^{-1/2} \cO_{\Sigma, \Sigma_w, T} \Sigma^{-1/2}(U_T - U'_T) \\
	&\le |\Sigma^{-1/2}(U_T - U'_T)|^2 \lambda_{\max}(\cO_{\Sigma, \Sigma_w, T}),
	\end{align*}
	and then, \sugiura{from the fact that $|\Sigma^{-1/2}(U_T - U'_T)|^2/2$ follows $\chi^2$ distribution with $(T+1)m$ degrees of freedom and \ito{the definition of $ c(\gamma,T) $,}} $|\Sigma^{-1/2}(U_T - U'_T)|^2 \le c(\gamma, T)^2$ with probability $\gamma$.
\end{proof}

In order to clarify the connection between conventional and Bayesian DP, it is worthwhile comparing Theorems \ref{thm:DP_Kadj} and \ref{thm:BDP_dynamic}. 
Bayesian differential privacy with the prior distribution $\bP_{U_T} \sim \cN_{(T+1)m}(0,\Sigma)$ corresponds to  differential privacy with an adjacency weight $K$. 
\begin{cor}
	\label{cor:comparison}
	Suppose that the prior distribution $\bP_{U_T}$ of $U_T$ is $\cN_{(T+1)m}(0,\Sigma)$.
	Let a finite time $T \in \bZ_+$ with $1 \ge \gamma \ge 0$, $\varepsilon > 0$, and $1/2 > \delta > 0$ be given. Then, the Gaussian mechanism \eqref{Out_noise} induced by $W_T \sim \cN_{(T+1)q}(\mu_w,\Sigma_w)$ is $(\bP_{U_T}, \gamma, \varepsilon, \delta)$-Bayesian differentially private at the time $T$, if the mechanism is $(K,\varepsilon,\delta)$-differentially private at the time $T$ with
	\begin{align}
	K:=\Sigma^{-1}/c(\gamma, T)^2 \label{eqn:K_and_Sigma}
	\end{align}
	with $c(\gamma,T)$ defined in Theorem \ref{thm:BDP_dynamic}. 
\end{cor}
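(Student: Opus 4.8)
The plan is to argue directly from Definitions~\ref{def:DP_dynamic_cadj} and~\ref{def:BDP_dynamic}, rather than from the sufficient condition of Theorem~\ref{thm:DP_Kadj}. First I would note that, because $W_T$ is independent of the inputs, for any deterministic $u\in\bR^{(T+1)m}$ the conditional law satisfies $\bP[N_T U_T + W_T \in \cS \mid U_T = u] = \bP[N_T u + W_T \in \cS]$, and likewise for $U'_T$. Hence, if the mechanism is $(K,\varepsilon,\delta)$-DP, then for \emph{every} Borel set $\cS$ the inner inequality in \eqref{diffprv_2} holds at every outcome $\omega$ with $|U_T(\omega) - U'_T(\omega)|_K\le 1$. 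In other words, the event inside the outer probability in \eqref{diffprv_2} contains the event $\{\,|U_T - U'_T|_K \le 1\,\}$, and this containment does not depend on $\cS$. Thus it suffices to show $\bP[\,|U_T - U'_T|_K \le 1\,]\ge \gamma$, after which \eqref{diffprv_2} follows for all $\cS$ simultaneously.

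Next I would evaluate this probability. Because $U_T$ and $U'_T$ are independent and each distributed as $\cN_{(T+1)m}(0,\Sigma)$, the difference $U_T - U'_T$ is zero-mean Gaussian with covariance $2\Sigma$, so $\Sigma^{-1/2}(U_T - U'_T)\sim\cN_{(T+1)m}(0,2I)$ and therefore $|\Sigma^{-1/2}(U_T - U'_T)|^2/2$ has a $\chi^2$ distribution with $(T+1)m$ degrees of freedom --- exactly the quantity already used in the proof of Theorem~\ref{thm:BDP_dynamic}. With the choice $K = \Sigma^{-1}/c(\gamma,T)^2$ from \eqref{eqn:K_and_Sigma} one has $|U_T-U'_T|_K^2 = |\Sigma^{-1/2}(U_T-U'_T)|^2/c(\gamma,T)^2$, so $\{\,|U_T - U'_T|_K\le 1\,\}$ coincides with $\{\,|\Sigma^{-1/2}(U_T - U'_T)|^2/2 \le c(\gamma,T)^2/2\,\}$, whose probability is $\int_0^{c(\gamma,T)^2/2} \tfrac{x^{(T+1)m/2-1}\mathrm{e}^{-x/2}}{2^{(T+1)m/2}\Gamma((T+1)m/2)}\,dx = \gamma$ by the defining relation \eqref{eq:chi_squared} of $c(\gamma,T)$. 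This gives $\bP[\,|U_T - U'_T|_K\le 1\,]=\gamma$, and combining with the first step completes the argument.

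The step requiring the most care --- more a matter of rigor than a genuine obstacle --- is the reduction in the first paragraph: one must make explicit that the ``for all $K$-adjacent deterministic pairs'' quantifier in Definition~\ref{def:DP_dynamic_cadj} translates, via the independence of $W_T$ from $(U_T,U'_T)$, into the statement about the conditional laws $\bP[\,\cdot\mid U_T]$ and $\bP[\,\cdot\mid U'_T]$ that appears in Definition~\ref{def:BDP_dynamic}, and that the adjacency event $\{\,|U_T - U'_T|_K\le 1\,\}$ is measurable and $\cS$-independent. An alternative route is to compare the two sufficient conditions directly, observing that $\cO_{K,\Sigma_w,T} = c(\gamma,T)^2\,\cO_{\Sigma,\Sigma_w,T}$ so that \eqref{ep_ck} with $K=\Sigma^{-1}/c(\gamma,T)^2$ becomes identical to \eqref{eq:cond_BDP}; however, that only relates the two sufficient conditions rather than the privacy notions as stated in the corollary, so I would present the definition-level argument above.
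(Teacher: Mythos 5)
Your proposal is correct and takes essentially the same route as the paper: the paper's proof likewise reduces the claim to the two facts that $(K,\varepsilon,\delta)$-DP forces the inner inequality of \eqref{diffprv_2} to hold whenever $|U_T-U'_T|_{\Sigma^{-1}}\le c(\gamma,T)$, and that this adjacency event has probability exactly $\gamma$ by the $\chi^2$ fact underlying \eqref{eq:chi_squared}. The only cosmetic difference is that you phrase the lower bound via event inclusion, whereas the paper writes the same estimate by conditioning on the adjacency event.
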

\begin{proof}
	Let us define $Y_{w, T} := N_T U_T + W_T,\ Y'_{w, T} := N_T U'_T + W_T.$
	If the mechanism is $(K, \varepsilon,\delta)$-differentially private with $K$ defined in \eqref{eqn:K_and_Sigma},
	\begin{align*}
	&\bP \Bigl[ \bP [Y_{w,T} \in \cS \mid U_T] \le \mathrm{e}^{\varepsilon} \bP [Y'_{w, T} \in \cS \mid U'_T] + \delta \Bigr]\\
	&\ge \bP \Bigl[ \bP [Y_{w,T} \in \cS \mid U_T] \le \\
	& \qquad\qquad\mathrm{e}^{\varepsilon} \bP [Y'_{w, T} \in \cS \mid U'_T] + \delta\ \big|\ |U_T - U'_T|_{\Sigma^{-1}} \le c(\gamma, T)\Bigr] \\
	&\times \bP\bigl[|U_T - U'_T|_{\Sigma^{-1}} \le c(\gamma, T)\bigr].
	\end{align*}
	\ito{Note that it holds
		\begin{align*}
		&\mathbb{P} \Bigl[ \mathbb{P} [Y_{w,T} \in \mathcal{S} \mid U_T] \le \\
		&\qquad\mathrm{e}^{\varepsilon} \mathbb{P} [Y'_{w, T} \in \mathcal{S} \mid U'_T] + \delta\ \big|\ |U_T - U'_T|_{\Sigma^{-1}} \le c(\gamma, T)\Bigr]=1,
		\end{align*}
		since the mechanism satisfies 
		\[\mathbb{P} [Y_{w,T} \in \mathcal{S} \mid U_T] \le 
		\mathrm{e}^{\varepsilon} \mathbb{P} [Y'_{w, T} \in \mathcal{S} \mid U'_T] + \delta
		\]whenever $|U_T - U'_T|_{\Sigma^{-1}} \le c(\gamma, T)$ (i.e. $(U_T, U'_T) \in \mathrm{Adj}_K$). Next, by definition of $c(\gamma, T)$, we have
		\[
		\mathbb{P}\bigl[|U_T - U'_T|_{\Sigma^{-1}} \le c(\gamma, T)\bigr] = \gamma .
		\]
		Consequently, we obtain the desired result.}
\end{proof}

It should be emphasized that such a simple relation is obtained since the prior is Gaussian and the system is linear. 
\subsection{Asymptotic analysis}\label{subsec:asymptotic}
For the conventional DP, it is known that when the system~\eqref{sys} is asymptotically stable, one can design a Gaussian noise which makes the induced mechanism differentially private for any time horizon $ T $~\cite[Corollary~2.9]{kawanoDesignPrivacyPreservingDynamic2020}.
This is because, for an asymptotically stable system, the incremental gain from $ |U_T - U'_T| $ to $ |Y_T - Y'_T| $ is bounded by its $ H^\infty $-norm for any $ T $, and by the definition of DP, the distance $ |U_T - U'_T| $ is also bounded by a predetermined threshold.
That is, even when the horizon of the data to be protected becomes longer, the distance between data sets where their indistinguishability is guaranteed is fixed.

On the other hand, for the proposed BDP, as $ T $ becomes larger, $ |U_T - U'_T| $ tends to take larger values according to the prior $ \bP_{U_T} $.
Consequently, to achieve BDP for a large time horizon $ T $, large noise is required. To see this from Theorem~\ref{thm:BDP_dynamic}, $ c(\gamma, T) > 0 $ is plotted in Fig.~\ref{fig:c_T} as a function of $ T $. As can be seen, as $ T $ increases, $ c(\gamma, T) $ becomes large, and therefore, from \eqref{eq:cond_BDP}, the scale parameter $ \Sigma_w $ of the noise is required to be large to guarantee BDP.
This fact suggests that the privacy requirement of BDP (with fixed $\varepsilon,\gamma,\delta$) for the long (possibly infinite) horizon case is too strong. This issue will be resolved by an appropriate scaling with $T$ to quantify the long-time average privacy.
\begin{figure}
	\centering
	\includegraphics[height=1.8in]{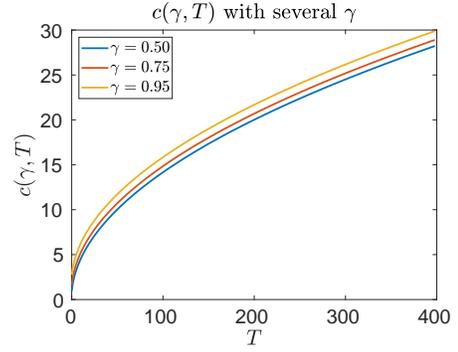}
	\caption{Graph of $c(\gamma,T)$.}
	\label{fig:c_T}
\end{figure}
\section{Design of mechanism}\label{sec:modeling_mechanism}
To motivate additional analysis in this section, let us consider a feedback interconnection of the plant $\cal P$ and the controller $\cal C$:
\begin{align*}
&{\cal P} :\left\{\begin{array}{l}
x_p(t+1) = A_p x_p(t) + B_p u_p(t),\\
y_p(t) = C_p x_p(t), \\
\end{array}\right.\\
&{\cal C} : \left\{\begin{array}{l}
x_c(t+1) = A_c x_c(t) + B_c e(t), \\
u_p(t) = C_c x_c(t), 
\end{array}\right.
\end{align*}
where the control objective is to make the tracking error $e(t):=r(t)-y_p(t)$ small, and its private information is the reference signal $ r(t) $. The attacker, who can access to the output $y_p(t)$, attempts to estimate $r(t)$.   
To prevent this inference, we add noise $v$ to $r$, which leads to the following closed loop dynamics:
\begin{align}\label{eq:interconnected_system}
&\left\{\begin{array}{l}
\bar{x}(t+1)
= \bar{A} \bar{x}(t)
+ \bar{B} ( r(t) + v (t) ),\\
y_p(t) = \bar{C} \bar{x}(t),\\
e(t) = r(t) - \bar{C} \bar{x}(t), \\
\end{array}\right.
\end{align}
where $ \bar{x} (t) := [x_p (t); x_c (t)] $ and
\begin{align*}
&\bar{A} := \begin{bmatrix}
A_p & B_p C_c \\
\sugiura{-B_c C_p} & A_c
\end{bmatrix}, \
\bar{B} := \begin{bmatrix}
0 \\ \sugiura{B_c}
\end{bmatrix}, \ \bar{C} := \begin{bmatrix}
C_p & 0
\end{bmatrix}.
\end{align*}
Suppose that the distribution of \sugiura{$V_T$} is given by ${\cal N}(0,\Sigma_v)$. Then, larger noise $v$ fluctuates $e$ more so that the variance of $E_T := [e(0);\cdots;e(T)]$ is given by
\begin{align}\label{eq:error_variance}
\Theta_T \Sigma_v \Theta_T^\top
\end{align}
with $\Theta_T := {\cal T}_T(\bar A,\bar B, -\bar C, 0)$.

\color{black}
\subsection{Minimization of the noise}

The expression \eqref{eq:error_variance} motivates us to seek the Gaussian noise with the minimum energy among those satisfying the sufficient condition \eqref{eq:cond_BDP} for Bayesian differential privacy derived by Theorem \ref{thm:BDP_dynamic}.
More specifically, we consider the following optimization problem with the covariance matrix of Gaussian noise $\Sigma_w \succ 0$ as the decision variable.
\begin{prob}\label{prob:opt_out-sigma_wadj}
	\begin{align}
	&\underset{\Sigma_w \succ 0}{\text{minimize}} &&{\rm Tr}(\Sigma_w) \label{opt_trSigma}\\
	&\text{subject to} && \lambda_{\max}(\Sigma^{1/2}N_T^\top\Sigma_w^{-1}N_T\Sigma^{1/2}) \le \frac{1}{c(\gamma, T)^2\cR(\varepsilon, \delta)^2}. \label{constraint:modified}
	\end{align}
\end{prob}
The constraint \eqref{constraint:modified} is an \sugiura{inequality} that is equivalent to the \sugiura{inequality} \eqref{eq:cond_BDP}.
Under certain assumptions, this solution can be obtained as follows\sugiura{.}
\begin{thm}\label{thm:min_noise}
	Assume that $N_T$ is full row rank. The optimal solution to problem \ref{prob:opt_out-sigma_wadj} is $\Sigma_w^* \coloneqq c(\gamma, T)^2\cR(\varepsilon, \delta)^2 N_T \Sigma N_T^\top$.
\end{thm}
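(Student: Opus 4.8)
The plan is to recast Problem~\ref{prob:opt_out-sigma_wadj} as the minimization of a linear functional over a translate of the positive semidefinite cone, from which the optimum can simply be read off. First I would abbreviate $\alpha := 1/(c(\gamma,T)^2\cR(\varepsilon,\delta)^2) > 0$ and $M := N_T\Sigma^{1/2} \in \bR^{(T+1)q\times(T+1)m}$. Since $N_T$ has full row rank and $\Sigma\succ 0$, the matrix $M$ has full row rank as well, so $MM^\top = N_T\Sigma N_T^\top \succ 0$. Because $\Sigma^{1/2}N_T^\top\Sigma_w^{-1}N_T\Sigma^{1/2} = M^\top\Sigma_w^{-1}M$ is symmetric positive semidefinite, the constraint~\eqref{constraint:modified} is equivalent to the linear matrix inequality $M^\top\Sigma_w^{-1}M \preceq \alpha I$.

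The key step is to eliminate $\Sigma_w^{-1}$ by a Schur complement argument. For $\Sigma_w\succ 0$ and $\alpha>0$, positivity of the block matrix
\begin{align*}
\begin{bmatrix} \Sigma_w & M \\ M^\top & \alpha I \end{bmatrix} \succeq 0
\end{align*}
is equivalent, by taking the Schur complement of the $(1,1)$ block, to $\alpha I - M^\top\Sigma_w^{-1}M\succeq 0$, and, by taking the Schur complement of the $(2,2)$ block, to $\Sigma_w - \alpha^{-1}MM^\top\succeq 0$. Hence $M^\top\Sigma_w^{-1}M\preceq\alpha I$ if and only if $\Sigma_w\succeq\alpha^{-1}MM^\top$, so the feasible set of Problem~\ref{prob:opt_out-sigma_wadj} is exactly $\{\Sigma_w : \Sigma_w\succeq\alpha^{-1}MM^\top\}$, where positive definiteness is automatic since $\alpha^{-1}MM^\top\succ 0$.

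It then remains to minimize ${\rm Tr}(\Sigma_w)$ over this set, which I would handle via monotonicity of the trace under the Löwner order: $\Sigma_w\succeq\alpha^{-1}MM^\top$ forces ${\rm Tr}(\Sigma_w)\ge\alpha^{-1}{\rm Tr}(MM^\top)$, with equality precisely when $\Sigma_w=\alpha^{-1}MM^\top$, since a nonzero positive semidefinite matrix has strictly positive trace. Therefore the unique optimal solution is $\Sigma_w^* = \alpha^{-1}MM^\top = c(\gamma,T)^2\cR(\varepsilon,\delta)^2\,N_T\Sigma N_T^\top$, and its feasibility is immediate: $M^\top(\Sigma_w^*)^{-1}M = \alpha\,M^\top(MM^\top)^{-1}M$ equals $\alpha$ times the orthogonal projection onto $\mathrm{range}(M^\top)$, so $\lambda_{\max}(M^\top(\Sigma_w^*)^{-1}M)=\alpha$ and \eqref{constraint:modified} holds with equality.

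I expect the only genuine obstacle to be spotting the Schur-complement equivalence that turns the $\lambda_{\max}$-constraint into the clean Löwner bound $\Sigma_w\succeq\alpha^{-1}N_T\Sigma N_T^\top$; once that is in hand, the conclusion follows from trace monotonicity alone. The full-row-rank hypothesis on $N_T$ enters exactly at this point: it is what makes $N_T\Sigma N_T^\top$ invertible, so that $\Sigma_w^*\succ 0$ is a legitimate covariance matrix, and equivalently what guarantees that $\alpha^{-1}N_T\Sigma N_T^\top$ is the minimum element of the feasible set.
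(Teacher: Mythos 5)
Your proof is correct and follows essentially the same route as the paper: the constraint is converted via a Schur complement into the Löwner bound $\Sigma_w \succeq c(\gamma,T)^2\cR(\varepsilon,\delta)^2 N_T\Sigma N_T^\top$, and trace monotonicity finishes the argument. You simply spell out a few details the paper leaves implicit (both directions of the Schur complement, feasibility of $\Sigma_w^*$, uniqueness, and where the full-row-rank hypothesis is used), which is fine.
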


\begin{proof}
	Denote ${\sf N}:=c(\gamma, T)\cR(\varepsilon, \delta) N_T \Sigma^{1/2}$ so that $\Sigma_w^* = {\sf N} {\sf N}^\top$. Then, \eqref{constraint:modified} is equivalent to $I - {\sf N}^\top \Sigma_w^{-1} {\sf N} \succeq 0$. By the Schur complement,
	\begin{align}
	\begin{bmatrix}
	\Sigma_w & {\sf N} \\ {\sf N}^\top & I
	\end{bmatrix}\succeq 0. 
	\end{align}
	This implies $\Sigma_w \succeq \Sigma_w^*$, and consequently ${\rm Tr}(\Sigma_w)\ge 
	{\rm Tr}(\Sigma_w^*)$. 
\end{proof}

The obtained optimal solution $\Sigma_w^*$ is a constant multiple of the covariance matrix of the distribution of the output data $Y_T = N_T U_T$ when the covariance matrix of the input data $U_T$ is $\Sigma$. This means that it is possible to efficiently conceal the input data from the output data by applying the noise having the same statistics (up to scaling) as the observed data.

\subsection{Input noise mechanism}
\begin{figure}[!t]
	\centering
	\includegraphics[height=0.6in]{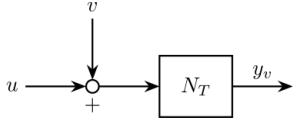}
	\caption{Mechanism with input noise.}
	\label{fig:diagram_in_noise}
\end{figure}
In this subsection, we study the case where noise is added to the input channel; see Fig.~\ref{fig:diagram_in_noise}.
Consider the following system with input noise:
\begin{align}
\left\{\begin{array}{l}
x(t+1) = A x(t) + B (u(t) + v(t)), \\
y_v(t) = C x(t) + D (u(t) + v(t)).
\end{array}\right. \label{sys_input_noise}
\end{align}
As in the aforementioned section, we assume $x(0) = 0$.
The output sequence $Y_{v, T} = [y_v(0);\cdots;y_v(T)]$ can be described as
\begin{align}
Y_{v, T} = N_T U_T + N_T V_T. \label{In_noise}
\end{align}
For the system in \eqref{sys}, adding noise $V_T$ to the input channel is equivalent to adding noise $W_T = N_T V_T$ to the output channel.
For simplicity, we assume that $N_T$ is square and nonsingular; this can be relaxed as in \cite[Corollary 2.16]{kawanoDesignPrivacyPreservingDynamic2020}.
From Theorem \ref{thm:BDP_dynamic}, we obtain the following corollary.
\begin{cor}\label{cor:BDP_dynamic_input_noise}
	Suppose that the prior distribution $\bP_{U_T}$ of $U_T$ is $\cN_{(T+1)m}(0,\Sigma)$.
	The Gaussian mechanism \eqref{In_noise} induced by $V_T \sim \cN_{(T+1)q}(\mu_v,\Sigma_v)$ is $(\bP_{U_T}, \gamma, \varepsilon, \delta)$-Bayesian differentially private at a finite time $T \in \bZ_+$ with $1\ge \gamma \ge 0$, $\varepsilon > 0$, and $1/2 > \delta > 0$, if the covariance matrix $\Sigma_v \succ 0$ is chosen such that
	\begin{align}
	\lambda_{\min}^{1/2}(\Sigma^{-1/2}\Sigma_v\Sigma^{-1/2}) \ge c(\gamma, T)\cR(\varepsilon, \delta) \label{eq:cond_BDP_in}
	\end{align}
	with $c(\gamma, T)>0$ defined in Theorem \ref{thm:BDP_dynamic}.
\end{cor}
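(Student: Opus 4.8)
The plan is to reduce the input-noise mechanism \eqref{In_noise} to an instance of the output-noise mechanism \eqref{Out_noise} and then apply Theorem~\ref{thm:BDP_dynamic}. First I would observe that \eqref{In_noise} is literally \eqref{Out_noise} with $W_T = N_T V_T$. Since $N_T$ is assumed square (so $q=m$) and nonsingular and $V_T \sim \cN_{(T+1)q}(\mu_v,\Sigma_v)$, the vector $W_T$ is again a nondegenerate Gaussian, $W_T \sim \cN_{(T+1)q}(N_T\mu_v,\ N_T\Sigma_v N_T^\top)$, and $\Sigma_w \coloneqq N_T\Sigma_v N_T^\top \succ 0$ because $\Sigma_v \succ 0$ and $N_T$ is invertible. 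The mean $N_T\mu_v$ is irrelevant, as Theorem~\ref{thm:BDP_dynamic} permits an arbitrary mean $\mu_w$.

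Next I would substitute this $\Sigma_w$ into the matrix $\cO_{\Sigma,\Sigma_w,T}$ of the sufficient condition \eqref{eq:cond_BDP}. Using $\Sigma_w^{-1} = N_T^{-\top}\Sigma_v^{-1}N_T^{-1}$ one gets $N_T^\top \Sigma_w^{-1} N_T = \Sigma_v^{-1}$, hence $\cO_{\Sigma,\Sigma_w,T} = \Sigma^{1/2}\Sigma_v^{-1}\Sigma^{1/2}$. Then I would use the identity $\Sigma^{1/2}\Sigma_v^{-1}\Sigma^{1/2} = (\Sigma^{-1/2}\Sigma_v\Sigma^{-1/2})^{-1}$, which gives $\lambda_{\max}(\cO_{\Sigma,\Sigma_w,T}) = \lambda_{\min}^{-1}(\Sigma^{-1/2}\Sigma_v\Sigma^{-1/2})$, and therefore $\lambda_{\max}^{-1/2}(\cO_{\Sigma,\Sigma_w,T}) = \lambda_{\min}^{1/2}(\Sigma^{-1/2}\Sigma_v\Sigma^{-1/2})$. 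Thus the hypothesis \eqref{eq:cond_BDP_in} is exactly condition \eqref{eq:cond_BDP} for the choice $\Sigma_w = N_T\Sigma_v N_T^\top$, and Theorem~\ref{thm:BDP_dynamic} delivers the $(\bP_{U_T},\gamma,\varepsilon,\delta)$-Bayesian differential privacy of \eqref{In_noise}.

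There is no serious obstacle; the only points requiring care are (i) verifying that $N_T\Sigma_v N_T^\top$ is genuinely positive definite so that Theorem~\ref{thm:BDP_dynamic} is applicable, which is precisely where nonsingularity of $N_T$ enters, and (ii) the (routine but easy-to-misstate) congruence/inverse eigenvalue identity used to pass from $\lambda_{\max}$ of $\Sigma^{1/2}\Sigma_v^{-1}\Sigma^{1/2}$ to $\lambda_{\min}$ of $\Sigma^{-1/2}\Sigma_v\Sigma^{-1/2}$. When $N_T$ is not square one would replace $N_T^{-1}$ by an appropriate pseudoinverse and argue as in \cite[Corollary~2.16]{kawanoDesignPrivacyPreservingDynamic2020}, which is the reason the square, nonsingular assumption is imposed here.
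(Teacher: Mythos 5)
Your proposal is correct and follows exactly the route the paper intends: the paper's proof simply declares the result a straightforward consequence of Theorem~\ref{thm:BDP_dynamic}, and your reduction $W_T = N_T V_T$, $\Sigma_w = N_T\Sigma_v N_T^\top$, together with the identity $\lambda_{\max}^{-1/2}(\Sigma^{1/2}\Sigma_v^{-1}\Sigma^{1/2}) = \lambda_{\min}^{1/2}(\Sigma^{-1/2}\Sigma_v\Sigma^{-1/2})$, is precisely the omitted calculation. Nothing is missing; your two points of care (positive definiteness of $N_T\Sigma_v N_T^\top$ and the inverse-eigenvalue identity) are exactly the right ones.
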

\begin{proof}
	The desired result is a straightforward consequence of Theorem \ref{thm:BDP_dynamic}. 
\end{proof}

In \cite[Corollary 2.16]{kawanoDesignPrivacyPreservingDynamic2020}, a sufficient condition for $(\varepsilon,\delta)$-differential privacy in the sense of Definition \ref{def:DP_dynamic_cadj} is given. The result concludes that \emph{the differential privacy level for the input noise mechanism does not depend on the system itself}.
Similarly, \eqref{eq:cond_BDP_in} does not depend on the system matrices in \eqref{sys} either. The difference for the Bayesian case is that \eqref{eq:cond_BDP_in} depends on the covariance $\Sigma$ of the prior distribution of the signals to be protected.

\begin{rem}\label{rem:min_input_noise}
	It is clear from Corollary \ref{cor:BDP_dynamic_input_noise} and Theorem \ref{thm:min_noise} that the minimum energy Gaussian noise that satisfies the sufficient condition for privacy guarantee \eqref{eq:cond_BDP_in} for the input noise mechanism can be easily obtained by 
	\begin{align}
	\Sigma^*_v = c(\gamma, T)^2\cR(\varepsilon, \delta)^2 \Sigma.
	\end{align}
	This characterization allows the natural interpretation that large noise is needed to protect large inputs; see also the next section.
\end{rem}

\section{Numerical example}
\label{sec:example}
Consider the feedback system in Fig.~\ref{fig:feedback}, where 
the plant and controller in \eqref{eq:interconnected_system} are given by 
\begin{align}
A_p &= \left[
\begin{array}{cc}
1.2 & -0.5 \\
1 & 0
\end{array}
\right] ,\ B_p = [-0.3, 0]^\top,\ C_p= [0.2, 0], \label{plant_para} \\
A_c &= \left[
\begin{array}{cc}
1 & 1 \\
0 & 0.1
\end{array}
\right] ,\ B_c = [0, \sugiura{-1}]^\top,\ C_c = [\sugiura{1.5}, 0]. \label{cont_para}
\end{align}
The integral property of the controller enhances the low-frequency tracking performance. The Bode gain diagram is shown in Fig.~\ref{fig:Bode}. Suppose that the reference $r$ is the signals to be protected, and it is public information that its spectrum is concentrated over the frequency range \sugiura{below $3\times10^{-2} \ {\rm rad/s}$}.
To represent this prior information, we took the frequency model for $r$ as in Example \ref{example:freq_model}, which is set to be a lowpass filter (generated by \texttt{lowpass(xi, \sugiura{3e-2})} in MATLAB). Recall that $U_T \sim \cN(0, \Sigma_U)$ with \eqref{eq:freq_covar} and \eqref{eq:freq_covar2}.

\begin{figure}[!t]
	\centering
	\includegraphics[height=0.6in]{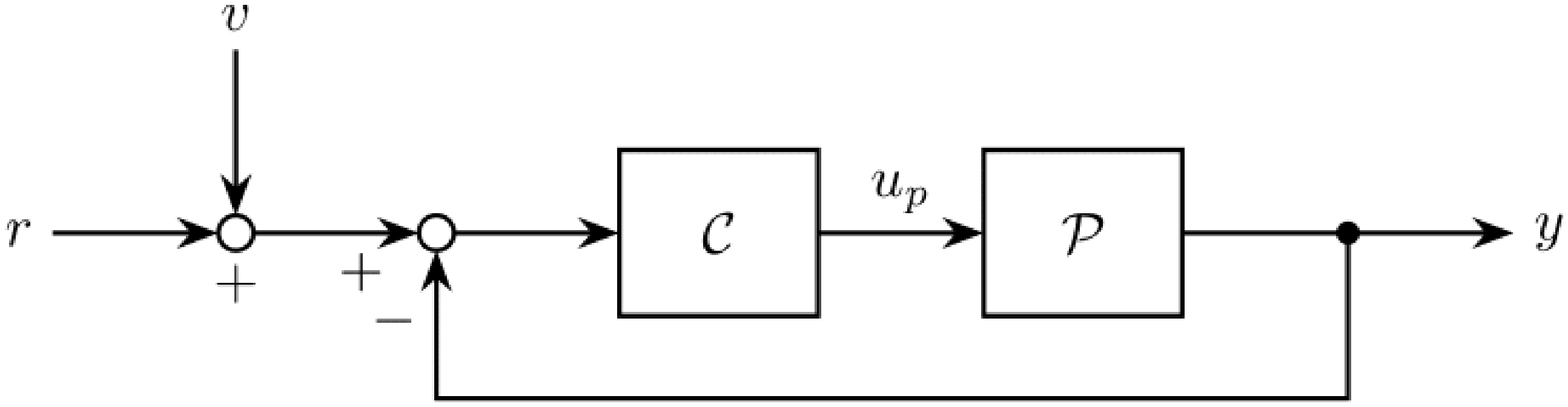}
	\caption{Feedback system with input noise mechanism}
	\label{fig:feedback}
\end{figure}
\begin{figure}
	\centering
	\includegraphics[height=2in]{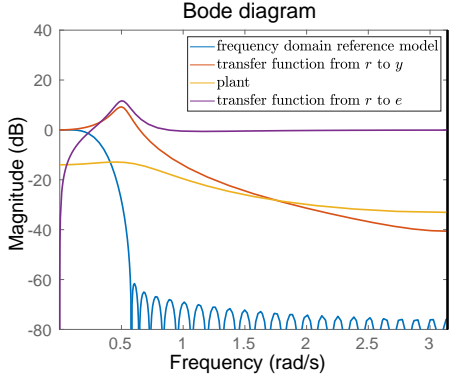}
	\caption{Bode gain diagram for the frequency domain reference model $G_r$ and the transfer functions for the feedback system \eqref{eq:interconnected_system} with \eqref{plant_para}, \eqref{cont_para}.}
	\label{fig:Bode}
\end{figure}
\begin{figure}
	\centering
	\includegraphics[height=1.9in]{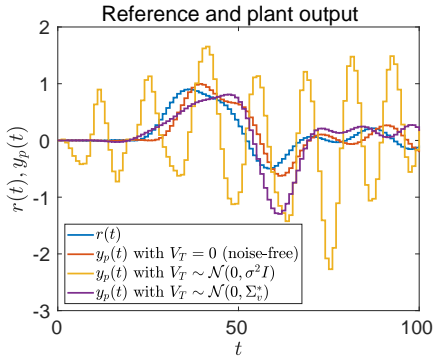}
	\caption{Reference signal $r(t)$ and plant output $y_p(t)$ for three mechanisms.}
	\label{fig:ref_and_output}
\end{figure}
We design input noise to make the system Bayesian differentially private for $\gamma = 0.5,\ T = 100,\ \varepsilon = 100,\ \delta = 0.1$. This leads to 
\begin{equation} 
c(\gamma, T) = 14.1657,\ \cR(\varepsilon, \delta) = 0.0774.
\end{equation}
In what follows we compare the following three cases:
\begin{itemize}
	\item noise-free,
	\item i.i.d. noise: $\sugiura{V}_T \sim \cN_{(T+1)m}(0, \sigma_{\rm iid}^2 I)$ with 
	\begin{align}
	\sugiura{{\rm Tr}(\sigma_{\rm iid}^2 I)} &= \sugiura{c(\gamma, T)^2 \cR(\varepsilon, \delta)^2 \lambda_{\max}(\Sigma_U)(T+1)m} \nonumber\\
	&= \sugiura{121.604},
	\end{align}
	where $\sigma_{\rm iid}$ is the minimum $\sigma$ satisfying \eqref{eq:cond_BDP_in}, and 
	\item the minimum noise obtained in Theorem \ref{thm:min_noise}: $\sugiura{V}_T \sim \cN_{(T+1)m}(0, \Sigma_v^*$), $\Sigma_v^* := c(\gamma, T)^2\cR(\varepsilon, \delta)^2\Sigma_U$ with
	\begin{align}
	\sugiura{{\rm Tr}(\Sigma_v^*) = 8.2574}.
	\end{align}
\end{itemize}

\sugiura{Fig.~\ref{fig:ref_and_output} shows the reference signal $r(t)$ and plant output $y_p(t)$ for these three cases.}
It can be seen that the output error for the noise-free case is the smallest. This is because the (realized) trajectory of $r$ is fully utilized allowing for some possibility that information about $r$ may leak from $y_p$. On the other hand, the other two cases guarantee the same level of Bayesian differential privacy. Note that the error fluctuation is suppressed in the minimum noise case.
Statistically, the output fluctuation caused by the added noise \red{$v$} can be evaluated by \eqref{eq:error_variance}. The value ${\rm Tr}(\Theta_T \Sigma_v \Theta_T^\top ) / (c(\gamma, T) \cR(\varepsilon, \delta))^2$
is given by
$
{\rm Tr}(\Theta_T \Sigma_U \Theta_T^\top ) = \sugiura{8.1998}
$
for the minimum noise case, which is smaller than 
$    \lambda_{\max}(\Sigma_U){\rm Tr}(\Theta_T  \Theta_T^\top )=\sugiura{55.4202}
$ for the i.i.d. case.

The interpretation is as follows: The i.i.d.~noise has uniform frequency distribution, which implies it adds more out-of-band noise than the minimum one. However, this out-of-band component does not contribute to the protection of $r$ since it is easily distinguished from $r$ thanks to its prior information in Fig.~\ref{fig:Bode}. Nevertheless, this out-of-band noise largely degrades the tracking performance. 

\begin{rem}
	The out-of-band noise as in the i.i.d. case is effective when the prior distribution of the signals to be protected is not public information. That is, this noise can prevent the attacker from inferring the prior distribution e.g., via the empirical Bayes. 
\end{rem}

\begin{rem}
	\red{Lastly, we would like to note that for the Gaussian prior $ \mathcal{N}(0,\Sigma_U) $ in the numerical example, a reference signal $ U'_T $ that deviates significantly from mean $ 0 $ in the sense of $ |0 - U'_T|_{\Sigma_U^{-1}} $ can be seen as an outlier. Therefore, out-of-band signals having large values $ |U'_T|_{\Sigma_U^{-1}} $ can be regarded as outliers.
		Bayesian differentially private mechanism provides privacy guarantees not only for in-band signals but also for out-of-band ones.
		In particular, the parameter $ \gamma $ for Bayesian differential privacy determines the extent to which privacy guarantees can be provided for out-of-band signals.}
\end{rem}

\section{Conclusion}\label{sec:conclusion}
In this study, we introduced Bayesian differential privacy for linear dynamical systems using prior distributions of input data to provide privacy guarantees 
even for input data pairs with large differences, and gave sufficient conditions to achieve it. Furthermore, we derived the minimum energy Gaussian noise that satisfies the condition.
As noticed in Subsection~\ref{subsec:asymptotic}, any finite noise cannot guarantee the Bayesian differential privacy for the infinite horizon case. This issue will be addressed in future work.

\section*{Acknowledgment}

This work was supported in part by JSPS KAKENHI under Grant Number \sugiura{JP21H04875}.

\ifCLASSOPTIONcaptionsoff
  \newpage
\fi



%



\bibliography{bare_jrnl}
\bibliographystyle{IEEEtran}




\end{document}